\newtheorem{sat}{Theorem}
\newtheorem{lem}[sat]{Lemma}
			\newtheorem{prop}[sat]{Proposition}
\newtheorem*{defi*}{Definition}			\newtheorem*{bei*}{Example}
\newtheorem*{sat*}{Theorem}				\newtheorem*{kor*}{Corollary}
\newtheorem*{rmk*}{Remark}					
\let\ssection=\section
\renewcommand{\section}{\setcounter{equation}{0}\ssection}
\newtheorem*{namedtheorem}{\theoremname}
\newcommand{\theoremname}{testing}
\newenvironment{named}[1]{\renewcommand{\theoremname}{#1}\begin{namedtheorem}}{\end{namedtheorem}}
\theoremstyle{remark}
\newcommand{\BS}{\mathbb S}			
\newcommand{\BF}{\mathbb F}				
\newcommand{\BO}{\mathbb O}
\newcommand{\CG}{\mathcal G}
		\newcommand{\CN}{\mathcal N}
\newcommand{\actson}{\curvearrowright}
\newcommand{\D}{\partial}
\DeclareMathOperator{\Out}{Out}		
\DeclareMathOperator{\Map}{Map}
\newcommand{\comment}[1]{}
\DeclareMathOperator{\Aut}{Aut}
\begin{document}

\title[]{Automorphisms of the graph of free splittings}
\author{Javier Aramayona \& Juan Souto}
\thanks{The first author was partially supported by M.E.C. grant MTM2006/14688 and NUI Galway's Millennium Research Fund; the second author was partially supported by NSF grant DMS-0706878 and Alfred P. Sloan Foundation} 
\begin{abstract}
We prove that every simplicial automorphism of the free splitting graph of a free group $\BF_n$ is induced by an outer automorphism of $\BF_n$ for $n\ge 3$.
\end{abstract}
\maketitle


In this note we consider the graph $\CG_n$ of free splittings of the free group $\BF_n$ of rank $n\ge 3$. Loosely speaking, $\CG_n$ is the graph whose vertices are non-trivial free splittings of $\BF_n$ up to conjugacy, and where two vertices are adjacent if they are represented by free splittings admitting a common refinement. The group $\Out(\BF_n)$ of outer automorphisms of $\BF_n$ acts simplicially on $\CG_n$. Denoting by $\Aut(\CG_n)$ the group of simplicial automorphisms of the free splitting graph, we prove: 

\begin{sat}\label{main}
The natural map $\Out(\BF_n)\to\Aut(\CG_n)$ is an isomorphism for $n\ge 3$.
\end{sat}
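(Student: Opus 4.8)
The plan is to follow the now-standard strategy for rigidity results of this kind (à la Ivanov for the curve complex, and its analogues for various graphs associated to mapping class groups and $\Out(\BF_n)$): one shows that a simplicial automorphism $\phi$ of $\CG_n$ preserves enough of the combinatorial structure to recover the free group, or rather an object on which $\Out(\BF_n)$ acts faithfully. Since $\Out(\BF_n)$ is known to be its own abstract commensurator and, more relevantly here, acts faithfully on $\CG_n$ (so the map $\Out(\BF_n)\to\Aut(\CG_n)$ is injective), the whole content is surjectivity: every $\phi\in\Aut(\CG_n)$ is realized by some outer automorphism.

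\medskip

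\noindent\textbf{Step 1: Identify a canonical class of vertices preserved by $\phi$.}
The vertices of $\CG_n$ are conjugacy classes of free splittings, i.e.\ actions of $\BF_n$ on simplicial trees with trivial edge stabilizers; combinatorially these come in different ``types'' according to the underlying graph of groups (a single edge that is either separating, giving a free product decomposition $\BF_n = A * B$, or non-separating, giving an HNN extension $\BF_n = A*$, and then more complicated graphs arising as refinements). The first task is to characterize one distinguished type purely in terms of the incidence structure of $\CG_n$ — most naturally the ``maximal'' free splittings, corresponding to the finest graph-of-groups decompositions (roses with $n$ petals, or trivalent graphs with trivial vertex groups), equivalently the vertices of maximal dimension in the simplicial completion / those lying in no further refinement. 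Alternatively one characterizes the one-edge splittings as those of minimal complexity. I would prove that $\phi$ permutes each such canonically-defined subset, by characterizing the relevant vertices via link conditions: e.g.\ a vertex $v$ is a maximal splitting iff its link, with the induced simplicial structure, has a prescribed local combinatorial shape that is not shared by non-maximal vertices.

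\medskip

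\noindent\textbf{Step 2: Pass to a simplicial complex on which the action is understood, and recover $\BF_n$.}
Once $\phi$ is known to preserve maximal splittings, one leverages the fact that a maximal free splitting essentially remembers a free basis (up to the finitely many moves relating different maximal refinements of a common splitting), so the set of maximal splittings together with the adjacency relation ``differ by an elementary move'' is closely related to spine of Outer space (the Culler–Vogtmann complex), or to Whitehead's complex of free bases. The plan is to show $\phi$ induces an automorphism of this auxiliary complex, and then invoke — or reprove in this setting — the rigidity of that complex: its automorphism group is $\Out(\BF_n)$. Concretely, after composing $\phi$ with a suitable element of $\Out(\BF_n)$ we may assume $\phi$ fixes one maximal splitting and all its neighbours, and then a connectivity/propagation argument forces $\phi$ to fix every vertex, giving $\phi=\Id$ and hence surjectivity.

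\medskip

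\noindent\textbf{Main obstacle.}
The delicate point is Step 1: giving a purely graph-theoretic (link-based) characterization of a canonical family of vertices. Unlike the curve complex, where the combinatorial types of simplices are controlled by a single numerical parameter (dimension), the free splitting graph mixes separating and non-separating one-edge splittings and splittings with non-trivial vertex groups, and a single vertex can have wildly different links depending on the ranks of the vertex groups of the associated graph of groups. So the heart of the argument will be a careful analysis of links of vertices in $\CG_n$ — identifying which local configurations (e.g.\ the existence of two adjacent vertices whose links intersect in a prescribed way, or the non-existence of common refinements of certain triples) pin down, say, the one-edge free-product splittings of the form $\BF_n = \BF_1 * \BF_{n-1}$, and then bootstrapping from these to all splittings. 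A secondary technical issue is the low-rank behaviour: the hypothesis $n\ge 3$ is needed because for $n=2$ the free splitting graph degenerates (it is essentially the Farey graph / Bass–Serre tree picture is too rigid), so the combinatorial characterizations in Step 1 must be checked to genuinely use $n\ge 3$.
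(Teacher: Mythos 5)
Your overall strategy is the right one and in fact coincides with the paper's: reduce to the spine of Culler--Vogtmann space and invoke Bridson--Vogtmann's theorem that $\Aut(K_n)\simeq\Out(\BF_n)$, then finish with a propagation argument showing the restriction to the spine is injective. But what you have written is a plan, not a proof, and the step you yourself flag as the ``main obstacle'' --- the purely combinatorial characterization of a distinguished family of vertices or simplices --- is precisely where all the mathematical content lies, and it is left entirely open. Without it, nothing in Steps 1 and 2 is established. Moreover, working directly with links of vertices of $\CG_n$ in the abstract, as you propose, is likely to be unmanageable: a one-edge splitting does not see its refinements in any directly computable way, and the link of a vertex of $\CG_n$ is an enormous object.

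The paper's resolution of this obstacle is a change of model that you do not mention and that does real work: $\CG_n$ is identified with the $1$-skeleton of the sphere complex $\BS_n$ of $M_n=\#^n\BS^1\times\BS^2$, and one first proves that $\BS_n$ is flag (this requires Hatcher's normal form for sphere systems; it is not automatic), so that an automorphism of the graph extends canonically to the full complex. In the sphere model the needed combinatorial characterizations become concrete $3$-manifold statements: a codimension-one simplex lies in the ``at infinity'' part $\BS_n^\infty$ if and only if it is a face of a \emph{unique} top-dimensional simplex (because $\BS^1\times\BS^2$ minus a ball contains only one essential sphere, whereas a $4$-holed $\BS^3$ contains three); and a top-dimensional simplex lies outside $CV_n$ if and only if it has a codimension-one face $\tau$ such that intersecting with $\tau$ preserves the property of lying in $\BS_n\setminus\BS_n^\infty$. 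These two lemmas are what make your Step 1 actually go through, and the final injectivity step also requires a genuine construction (exhibiting, for each separating sphere $S$, a system $\Sigma$ with $\Sigma\setminus S$ in $CV_n$ and $S$ the unique separating sphere in its complement). If you want to complete your argument, you should either carry out your link analysis in $\CG_n$ directly --- which I expect to be substantially harder --- or adopt the sphere-complex translation and prove the flagness and the two characterizations above. Finally, note that injectivity of $\Out(\BF_n)\to\Aut(\CG_n)$ is not something to be waved through as ``known''; in the paper it falls out of the same chain of maps once one knows the composite $\Out(\BF_n)\to\Aut(\BS_n)\to\Aut(K_n)$ is the Bridson--Vogtmann isomorphism.
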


We briefly sketch the proof of Theorem \ref{main}. We identify $\CG_n$ with the 1-skeleton of the {\em sphere complex} $\BS_n$ and observe that every automorphism of $\CG_n$ extends uniquely to an automorphism of $\BS_n$. It is due to Hatcher \cite{Hatcher} that the sphere complex contains an embedded copy of the {\em spine} $K_n$ of Culler-Vogtmann space. We prove that the latter is invariant under $\Aut(\BS_n)$, and that the restriction homomorphism $\Aut(\BS_n) \to \Aut(K_n)$ is injective. The claim of Theorem \ref{main} then follows from a result of  Bridson-Vogtmann \cite{Bridson-Vogtmann} which asserts that $\Out(\BF_n)$ is the full automorphism group of $K_n$.
\medskip

We are grateful to our motherland for the beauty of its villages.

\section{}
Fixing from now on $n\ge 3$, let $M_n=\#^n\BS^1\times\BS^2$ be the connected sum of $n$ copies of $\BS^1\times\BS^2$. Observe that $\pi_1(M_n)$ is isomorphic to $\BF_n$ and that choosing a basepoint, and conveniently forgetting it afterwards, we can, once and for all, identify 
\begin{equation}\label{eq0}
\pi_1(M_n)\simeq\BF_n
\end{equation}
up to conjugacy. We denote by $\Map(M_n)$ the mapping class group of $M_n$, i.e., the group of isotopy classes of self-diffeomorphisms of $M_n$; observe that \eqref{eq0} induces a homomorphism
\begin{equation}\label{eq1}
\Map(M_n)\to\Out(\BF_n)
\end{equation}
By work of Laudenbach \cite{Laudenbach1} the homomorphism \eqref{eq1} has finite kernel, generated by Dehn-twists along essential embedded 2-dimensional spheres. Recall that an embedded 2-sphere in a 3-manifold is {\em essential} if it does not bound a ball. Two essential embedded 2-spheres $S,S'$ in a 3-manifold are {\em parallel} if they are isotopic. It is due to Laudenbach \cite{Laudenbach1} that $S$ and $S'$ are parallel if and only if they are homotopic to each other. If two parallel essential embedded 2-spheres $S,S'\subset M_n$ are parallel then they bound a submanifold homeomorphic to $\BS^2\times(0,1)$. 

By a {\em system of spheres} in $M_n$ we understand a collection of pairwise disjoint, non-parallel,  essential embedded 2-spheres. A system of spheres is {\em maximal} if it is not properly contained in another system of spheres. Before moving on to more interesting topics we remind the reader of a few useful facts:
\begin{itemize}
\item If $\Sigma\subset M$ is a maximal systems of spheres, then every component of $M\setminus\Sigma$ is homeomorphic to a 3-sphere with 3 balls removed. In particular, $\Sigma$ has $3n-3$ components.
\item If $S$ is a connected component of a maximal system of spheres $\Sigma$, then all components of $M\setminus(\Sigma\setminus S)$ but one are homeomorphic to a 3-sphere with 3 balls removed. The remaining component is either homeomorphic to a 3-sphere with 4 balls removed or to $\BS^1\times\BS^2$ with a ball removed.
\item If $U$ is homeomorphic to a 3-sphere with 4 balls removed, then there are exactly 3 isotopy classes of embedded spheres in $U$ which are neither isotopic to a component of $\D U$ nor bound balls. Namely, every such sphere $S$ separates 2 of the components of $\D U$ from the other two, and the so obtained decomposition of the set of components of $\D U$ determines $S$ up to isotopy.
\item If $U$ is homeomorphic to $\BS^1\times\BS^2$ with a ball removed, then there is single isotopy class of embedded 2-spheres in $U$ which are neither isotopic to a component of $\D U$ nor bound balls.
\item If $S$ is a non-separating component of a maximal system of spheres $\Sigma$, then there is $\Sigma' \subset \Sigma$ with $S\subset \Sigma'$ and $M \setminus \Sigma'$ a 3-sphere with $2n$ balls removed.  
\end{itemize}
The facts listed above follow easily from the existence and uniqueness theorem for prime decompositions of 3-manifolds. See \cite{Hempel} for standard notions of 3-dimensional topology and \cite{Laudenbach1,Laudenbach2} for a treatment of the relation between homotopy and isotopy of embedded 2-spheres in 3-manifolds. 

\section{}
Given an essential embedded 2-sphere $S$ in $M_n$, we denote its isotopy class by $[S]$. 

The {\em sphere complex} $\BS_n$ associated to $M_n$ is the simplicial complex whose vertices are isotopy classes of essential embedded 2-spheres in $M_n$ and where $k+1$ distinct vertices $[S_0],\dots,[S_k]$ span a $k$-simplex if there is a system of spheres $S_0'\cup\dots\cup S_k'$ with $S_i'\in[S_i]$. By definition, the mapping class group of $M_n$ acts simplicially on the sphere complex $\BS_n$. This yields a homomorphism
\begin{equation}\label{eq2}
\Map(M_n)\to\Aut(\BS_n)
\end{equation}
Throughout this note, given a simplicial complex $X$, we denote by $\Aut(X)$ the group of simplicial automorphisms of $X$.

It also follows from work of Laudenbach \cite{Laudenbach1,Laudenbach2} (see also \cite{Hatcher}) that the kernels of the homomorphisms \eqref{eq1} and  \eqref{eq2} are equal. In particular, the action of $\Map(M)$ on $\BS_n$ induces a simplicial action
\begin{equation}
\Out(\BF_n)\actson\BS_n
\label{actionsphere}
\end{equation}
We will observe below that the 1-skeleton $\BS_n^{(1)}$ of $\BS_n$ is equivariantly isomorphic to the free splitting graph $\CG_n$ which we now define.

By a {\em free splitting} of the free group $\BF_n$ we understand an isomorphism between $\BF_n$ and the fundamental group of a graph of groups with trivial edge groups. Two free splittings are said to be {\em equivalent} if there is an $\BF_n$-equivariant isometry between the corresponding Bass-Serre trees. A free splitting of $\BF_n$ is a {\em refinement} of another free splitting if there is a $\BF_n$-equivariant simplicial map from the Bass-Serre tree of the first splitting to the Bass-Serre tree of the second. 

In the sequel, we will pass freely between free splittings, the associated graph of group decompositions and the associated Bass-Serre trees. Similarly, we will say for instance that a Bass-Serre tree is a refinement of some other Bass-Serre tree. We hope that this will cause no confusion.

The {\em free splitting graph} $\CG_n$ of $\BF_n$ is the simplicial graph whose vertices are equivalence classes of free splittings of $\BF_n$ whose corresponding graph of groups have a single edge. Two vertices of $\CG_n$ are adjacent if they are represented by free splittings which have a common refinement. Observe that $\Out(\BF_n)$ acts on $\CG_n$ by simplicial automorphisms.

\begin{lem}\label{lem:split-sphere}
There is a simplicial isomorphism $\BS_n^{(1)}\to\CG_n$ conjugating the standard actions of $\Out(\BF_n)$.
\end{lem}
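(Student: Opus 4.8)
The plan is to build the isomorphism $\BS_n^{(1)}\to\CG_n$ by the classical duality between embedded spheres in $M_n$ and splittings of $\pi_1(M_n)$. Given an essential embedded 2-sphere $S\subset M_n$, cutting $M_n$ along $S$ and applying van Kampen's theorem (together with the identification \eqref{eq0}) expresses $\BF_n\simeq\pi_1(M_n)$ as the fundamental group of a graph of groups with trivial edge groups: if $S$ is non-separating one gets an HNN extension, and if $S$ separates one gets a free product, in each case with a single edge. One checks that isotopic spheres yield equivalent splittings, so this assignment descends to a well-defined map $\Phi$ on vertex sets; moreover it is $\Out(\BF_n)$-equivariant by construction, since the mapping-class-group action on $\BS_n$ covers the $\Out(\BF_n)$-action on splittings via \eqref{eq1} and \eqref{eq2}, whose kernels agree.

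Next I would produce an inverse. Given a one-edge free splitting of $\BF_n$, realize it topologically: the Bass-Serre tree has a single orbit of edges, and one can build a $\pi_1$-injective embedded sphere in $M_n$ dual to that edge, for instance by taking a map from $M_n$ to the associated graph transverse to the midpoint of the edge — here one uses that $M_n$ is the connected sum $\#^n\BS^1\times\BS^2$ so that every such splitting is geometrically visible. By Laudenbach's theorems \cite{Laudenbach1,Laudenbach2}, recalled in Section 1, homotopic essential spheres are isotopic, which guarantees that the isotopy class of the dual sphere depends only on the equivalence class of the splitting; this yields a well-defined map $\Psi:\CG_n\to\BS_n^{(1)}$, and $\Phi\circ\Psi$, $\Psi\circ\Phi$ are identities by construction.

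Finally I would check that $\Phi$ is simplicial, i.e.\ that it sends edges to edges and non-edges to non-edges. Two sphere classes $[S_0],[S_1]$ are adjacent in $\BS_n^{(1)}$ precisely when they admit disjoint representatives $S_0'\sqcup S_1'$; in that case $S_0'\cup S_1'$ cuts $M_n$ into pieces giving a two-edge graph of groups which simultaneously refines the two one-edge splittings, so $\Phi[S_0]$ and $\Phi[S_1]$ are adjacent in $\CG_n$. Conversely, if the two splittings have a common refinement then the refining Bass-Serre tree is dual to a system of two disjoint spheres representing $[S_0]$ and $[S_1]$, so the spheres are adjacent in $\BS_n$; the key point is that a common refinement can be taken to be a genuine two-edge splitting, which is where one again invokes the structure of systems of spheres and the existence/uniqueness of prime decompositions from Section 1.

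The main obstacle is the careful bookkeeping in the converse direction of the last step: translating "there is an $\BF_n$-equivariant simplicial map between Bass-Serre trees" into "there are disjoint embedded spheres in the correct isotopy classes." This requires realizing an abstract common refinement as a sphere system and then using Laudenbach's homotopy-implies-isotopy result to pin down isotopy classes; this is essentially Hatcher's dictionary between sphere systems and splittings \cite{Hatcher}, and the honest work is verifying that no information is lost when passing from the algebraic to the topological side.
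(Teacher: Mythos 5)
Your proposal follows essentially the same route as the paper: spheres give one-edge splittings via van Kampen, one-edge splittings give spheres via transverse preimages under an equivariant map to the Bass--Serre tree, and disjointness of spheres corresponds to common refinement of splittings. The one step where the paper does real work that your sketch elides is the construction of the inverse map $\Psi$: the transverse preimage of a point on an edge is a priori a disjoint union of closed surfaces, and one needs (i) a surgery argument via Dehn's lemma to arrange that the components are $\pi_1$-injective, bound no balls, and are pairwise non-parallel, (ii) triviality of the edge stabilizers to conclude that each such $\pi_1$-injective component is in fact a sphere, and (iii) minimality of the Bass--Serre tree to conclude that the preimage projects to a \emph{single} sphere --- without (iii) your $\Psi$ does not obviously land in the vertex set of $\BS_n^{(1)}$. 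You flag the "honest work" at the end, but these three points are exactly what that work consists of and should be spelled out.
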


Lemma \ref{lem:split-sphere} is surely known to all experts in the field; we sketch a proof for completeness.

\begin{proof}[Sketch of proof]
To an essential embedded sphere $S$ in $M_n$ we associate its dual tree, that is, the Bass-Serre tree of the graph of groups decomposition of $\pi_1(M_n)\simeq\BF_n$ given by the Seifert-van Kampen theorem.  Isotopic spheres yield equivalent free splittings and hence we obtain a vertex of $\CG_n$ for every vertex of $\BS_n$. The dual tree to the union of two disjoint embedded spheres $S,S'$ is the Bass-Serre tree of a free splitting of $\BF_n\simeq\pi_1(M_n)$ which is clearly a refinement of the Bass-Serre trees associated to $S$ and $S'$. In other words, the map between vertices extends to a map $\BS_n^{(1)}\to\CG_n$.

Given now a free splitting with a single edge, consider the associated Bass-Serre tree  $T$, and let $\phi:\tilde M\to T$ be an equivariant simplicial map; here $\tilde M$ is the universal cover of $M$ endowed with a simplicial structure lifted from $M$ and we have identified $\pi_1(M)$ with $\BF_n$ as per \eqref{eq0}. Given a regular value $\theta$ on an edge of $T$ we consider its preimage $\phi^{-1}(\theta)$. A well-known surgery argument using Dehn's lemma implies that, up to $\BF_n$-equivariant homotopy of $\phi$, we may assume that  every component of the projection of $\phi^{-1}(\theta)$ to $M$ is $\pi_1$-injective, does not bound a ball, and that no two components of are parallel. Since the edge stabilizers are trivial, we deduce that every component of the projection of $\phi^{-1}(\theta)$ to $M$ is a sphere. Let $T'$ be the dual tree associated to the projection of $\phi^{-1}(\theta)$ to $M$. By construction the map $\phi:\tilde M\to T$ induces an injective simplicial map $T'\to T$. Since the tree $T$ is minimal, this map is surjective as well; hence, the projection of $\phi^{-1}(\theta)$ to $M$ is a single sphere. In this way, we have associated a vertex in $\BS_n$ to every vertex in $\CG_n$. An analogous argument applies to the edges.
\end{proof}

In light of Lemma \ref{lem:split-sphere}, the claim of Theorem \ref{main} will follow once we prove that $\Out(\BF_n)$ is the full automorphism group of $\BS_n^{(1)}$. The first step in this direction is to observe that every automorphism of $\BS_n^{(1)}$ is induced by an automorphism of the whole sphere complex. 

\begin{lem}\label{lem:flag}
$\BS_n$ is a flag complex; in particular, every automorphism of $\BS_n^{(1)}$ is the restriction of a unique automorphism of $\BS_n$.
\end{lem}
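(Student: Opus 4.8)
The plan is to prove the stronger claim that $\BS_n$ is a flag complex; the statement about automorphisms of $\BS_n^{(1)}$ is then formal, since a simplicial automorphism of the $1$-skeleton of a flag complex automatically permutes the cliques of the $1$-skeleton, and in a flag complex these cliques are precisely the simplices. So it suffices to show: if $[S_0],\dots,[S_k]$ are distinct vertices of $\BS_n$ that are pairwise adjacent, then they span a $k$-simplex, i.e.\ there exist representatives $S_i' \in [S_i]$ that are pairwise disjoint and pairwise non-parallel.

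First I would reduce to a disjointness statement. The non-parallelism is built into the hypothesis that the vertices are distinct together with Laudenbach's theorem (homotopic $\Leftrightarrow$ isotopic for essential embedded spheres), so the real content is to realize all the spheres simultaneously disjointly. The key topological input is Hatcher's normal form: after fixing a maximal sphere system and passing to the associated cell structure on $M_n$, any finite collection of spheres can be isotoped into normal position, and in normal position disjointness of the isotopy classes is detected by a local, pairwise condition — in fact one works with the \emph{minimal position} furnished by counting intersection circles. Concretely, I would argue by induction on $k$: assume $S_0',\dots,S_{k-1}'$ have already been put in simultaneously disjoint position, and we are given that $[S_k]$ is adjacent to each $[S_i]$. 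Consider the manifold $N = M_n \setminus (S_0' \cup \dots \cup S_{k-1}')$ (with the boundary spheres remembered). One then needs to push a representative of $[S_k]$ off the union: isotope $S_k$ to minimize the total number of intersection circles with $\bigcup_{i<k} S_i'$; an innermost such circle on $S_k$ bounds a disk $D$ in $S_k$, and $\D D$ bounds a disk $D'$ on some $S_i'$ because the two spheres are disjointly realizable and hence, by an irreducibility/incompressibility argument in $M_n$ (which is irreducible apart from the obvious essential spheres, and the $S_i'$ are incompressible in their complement), the circle is trivial on $S_i'$; surgering $S_k$ along $D'$ and pushing across removes that circle without creating new ones and without changing the isotopy class, contradicting minimality unless the intersection is already empty. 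Care is needed that the surgered sphere is still essential and still in the class $[S_k]$, which follows since a surgery along a disk replaces $S_k$ by (one of) two spheres whose union is homotopic to $S_k$, and only the essential one can be homotopic to $S_k$.

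The main obstacle I anticipate is the step ``$\D D$ bounds a disk $D'$ on $S_i'$.'' A priori the innermost disk $D \subset S_k$ could meet $S_i'$ in a curve that is essential on $S_i'$ — but $S_i'$ is a sphere, so \emph{every} simple closed curve on it bounds a disk on each side, which disposes of the worry about essentiality on $S_i'$; the genuine subtlety is instead making sure that, among the two disk-pieces of $S_i'$ cut off by $\D D$, one can be chosen so that surgery strictly decreases the intersection count with \emph{all} of $S_0' \cup \dots \cup S_{k-1}'$ simultaneously, not just with $S_i'$. This is handled by choosing the innermost circle globally (innermost on $S_k$ among all intersection circles with the whole union) and taking $D'$ on the relevant $S_i'$ innermost among the circles of $(\bigcup_{j} S_j') \cap S_i'$ lying inside it, so that the push-across sweeps through a region meeting no other $S_j'$; this is exactly the bookkeeping in Hatcher's normal-form argument, which I would cite from \cite{Hatcher} rather than redo. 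Once simultaneous disjointness is achieved, non-parallelism of the $S_i'$ is immediate from distinctness of the classes, so $\{[S_0],\dots,[S_k]\}$ spans a simplex and $\BS_n$ is flag.
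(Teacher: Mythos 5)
Your reduction of the lemma to the flag property, and of the flag property to simultaneous disjoint realizability, matches the paper, but the surgery argument at the core of your induction has a genuine gap. The step ``surgering $S_k$ along $D'$ and pushing across removes that circle \dots without changing the isotopy class'' fails in $M_n$. Compressing $S_k$ along the disk $D'\subset S_i'$ produces two embedded spheres, $D\cup D'$ and $(S_k\setminus D)\cup D'$, and in $\pi_2(M_n)$ the class of $S_k$ is the sum of their classes. Your justification --- ``only the essential one can be homotopic to $S_k$'' --- tacitly assumes that one of the two pieces is inessential; in the highly reducible manifold $M_n$ both pieces can be essential, and then neither is homotopic (hence, by Laudenbach, neither is isotopic) to $S_k$. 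The innermost-disk swap is an isotopy only in the irreducible setting, which is precisely what is unavailable here. A quick sanity check exposes the problem: your surgery step never actually uses the hypothesis that $[S_k]$ is adjacent to each $[S_i]$ (the only place you invoke it is to conclude that $\partial D$ bounds a disk on $S_i'$, which, as you observe yourself, is automatic on a sphere). If the argument were correct it would show that \emph{any} essential sphere can be isotoped off any sphere system, i.e.\ that $\BS_n$ is a single simplex --- which is false, since $\BS_n$ has non-adjacent vertices.

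The appeal to ``the bookkeeping in Hatcher's normal-form argument'' does not repair this, because normal form is not disjoint (or even intersection-minimizing-by-surgery) position: a sphere in normal form with respect to a maximal system $\Sigma$ still meets $\Sigma$ in general. What the paper actually uses is the \emph{uniqueness} part of normal form, \cite[Proposition 1.2]{Hatcher}: two isotopic spheres in normal form with respect to $\Sigma$ are joined by a homotopy through immersed spheres that remains transverse to $\Sigma$, so their intersection patterns with each component of $\Sigma$ agree. The induction is also set up differently: the inductive hypothesis is applied to \emph{three} different $(k-1)$-element subsets of the clique so as to produce two representatives $S_k$ and $S_k'$ of the last vertex with complementary disjointness properties relative to $S_1,\dots,S_{k-1}$; putting both in normal form with respect to a maximal system containing $S_1,\dots,S_{k-1}$ and invoking the uniqueness statement then transfers the disjointness of $S_k'$ from $S_{k-1}$ to $S_k$. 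You would need to adopt this (or some equivalent mechanism that genuinely exploits pairwise adjacency) to close the gap.
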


Recall that a simplicial complex is {\em flag} if every complete subgraph on $r+1$ vertices contained in the $1$-skeleton is the 1-skeleton of an $r$-simplex.

Lemma \ref{lem:flag} has also been established in \cite[Theorem 3.3]{gadgil}. Again, we prove it only for completeness.

\begin{proof}
By the very definition of simplicial complex as a subset of the power set of the set of vertices, a simplex in a simplicial complex is uniquely determined by its set of vertices. Hence, it is clear that every automorphism of the 1-skeleton of a flag simplicial complex extends uniquely to an automorphism of the whole complex. Therefore, it suffices to prove the first claim of the lemma.

We will argue by induction. For complete graphs with two vertices, there is nothing to prove. So, suppose that the claim has been proved for all complete graphs with $k-1\ge 2$ vertices and let $v_1,\dots,v_k$ be vertices in $\BS_n^{(1)}$ spanning a complete graph. Applying the induction assumption three times, namely to the complete subgraphs spanned by $\{v_1,\dots,v_{k-1}\}$, $\{v_1,\dots,v_{k-2},v_k\}$ and $\{v_1,\dots,v_{k-3},v_{k-1},v_k\}$ and isotopying spheres to avoid redundancies, we can find spheres 
$$S_1,\dots,S_{k-1},S_k,S_k'\subset M_n$$ 
satisfying:
\begin{enumerate}
\item $S_1,\dots,S_{k-1}$ represent $v_1,\dots,v_{k-1}$ respectively, and both $S_k$ and $S_k'$ represent $v_k$.
\item $S_i\cap S_j=\emptyset$ for $i,j=1,\dots,k-1$ with $i\neq j$.
\item $S_i \cap S_k=\emptyset$ for $i=1,\dots,k-2$, and
\item $S_i \cap S_k'=\emptyset$ for $i=1,\dots,k-3$ and $i=k-1$.
\end{enumerate}
Choose a maximal system of spheres $\Sigma$ with
$$S_1,\dots,S_{k-1}\subset\Sigma$$
By \cite[Proposition 1.1]{Hatcher} we can assume that the sphere $S_k$ is in {\em normal form} with respect to $\Sigma$. Recall that this means that $S_k$ is either contained in $\Sigma$ or meets $\Sigma$ transversally and that, in the latter case, the closure $P$ of any component of $S_k\setminus\Sigma$ satisfies:
\begin{itemize}
\item $P$ meets any component of $\Sigma$ in at most one circle.
\item $P$ is not a disk isotopic, relative to its boundary, to a disk in $\Sigma$.
\end{itemize}
Similarly, we assume that $S_k'$ is also in normal form with respect to $\Sigma$.

By assumption, the spheres $S_k$ and $S_k'$ are isotopic. By \cite[Proposition 1.2]{Hatcher}, there is a homotopy $(S(t))_{t\in[0,1]}$ by immersed spheres with $S(0)=S_k$ and $S(1)=S_k'$ satisfying:
\begin{itemize}
\item If $S(0)\subset\Sigma$ then $S(t)\subset\Sigma$ for all $t$ and hence $S(1)=S(0)$.
\item If $S(0)\not\subset\Sigma$ then $S(t)$ is transverse to $\Sigma$ for all $t$.
\end{itemize}
If we are in the first case then $S_1,\dots,S_k\subset\Sigma$ are pairwise disjoint and we are done. Otherwise, recall that $S(1)=S_k'$ does not intersect $S_{k-1}$. Since transversality is preserved through the homotopy, it follows that $S(0)=S_k$ does not intersect $S_{k-1}$ either. We deduce from (1) and (2) above that the spheres $S_1,\dots,S_k$ are pairwise disjoint. This concludes the proof of the induction step, thus showing that $\BS_n$ is a flag complex.
\end{proof}

\section{}
We now briefly recall the definition of Culler-Vogtmann space $CV_n$, also called {\em outer space}. A point in $CV_n$ is an equivalence class of marked metric graphs $X$ with $\pi_1(X)=\BF_n$, of total length $1$, without vertices of valence one and without separating edges. Two such marked graphs are {\em equivalent} if they are isometric via an isometry in the correct homotopy class. Two marked metric graphs $X,Y$ are close in $CV_n$ if, for some $L$ close to $1$, there are $L$-Lipschitz maps $X\to Y$ and $Y\to X$ in the correct homotopy classes. See \cite{Culler-Vogtmann}  for details.

Following Hatcher \cite{Hatcher} we denote by $\BO_n$ the similarly defined space where one allows the graphs to have separating edges (but no vertices of valence one). Observe that
$$CV_n\subset\BO_n$$
The authors are tempted to refer to $\BO_n$ as {\em hairy outer space}. 

The group $\Out(\BF_n)$ acts on $\BO_n$ by changing the marking. This action preserves $CV_n$ as a subset of $\BO_n$. The interest of the space $\BO_n$ in our setting is that, by work of Hatcher, it is $\Out(\BF_n)$-equivariantly homeomorphic to a subset of the sphere complex $\BS_n$. We now describe this homeomorphism.

From now on we interpret points in the sphere complex $\BS_n$ as weighted sphere systems in $M$; to avoid redundancies we assume without further mention that all weights are positive. As in \cite{Hatcher}, let $\BS_n^\infty$ be the subcomplex of $\BS_n$ consisting of those elements $\sum_ia_iS_i\in\BS_n$ such that $M\setminus\bigcup_iS_i$ has at least one non-simply connected component. 

To a point $\sum_ia_iS_i\in\BS_n\setminus\BS_n^\infty$ we associate the dual graph to $\bigcup_iS_i$ and declare the edge corresponding to $S_i$ to have length $a_i$. This yields a map
$$\BS_n\setminus\BS_n^\infty\to\BO_n$$
Hatcher \cite[Appendix]{Hatcher} shows:

\begin{prop}[Hatcher]
The map $\BS_n\setminus\BS_n^\infty\to\BO_n$ is an $\Out(\BF_n)$-equivariant  homeomorphism.
\end{prop}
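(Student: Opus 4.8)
The plan is to construct an explicit inverse to the map $\BS_n\setminus\BS_n^\infty\to\BO_n$ and to verify continuity of both maps, using the normal form technology for sphere systems already invoked in the proof of Lemma \ref{lem:flag}. First I would recall the construction from Hatcher's appendix in a form adapted to our notation: given a marked metric graph $X$ representing a point of $\BO_n$, one uses the marking to build a homotopy equivalence $M_n\to X$; the preimages of the midpoints of the edges of $X$, after putting the map in general position and running the standard surgery argument (Dehn's lemma, exactly as in the sketch of Lemma \ref{lem:split-sphere}), become an essential sphere system $\bigcup_i S_i$ in $M_n$, with $S_i$ dual to the edge $e_i$ of $X$, and one weights $S_i$ by the length of $e_i$. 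This produces a candidate inverse map $\BO_n\to\BS_n\setminus\BS_n^\infty$; the fact that the image avoids $\BS_n^\infty$ is precisely the requirement that $X$ have no vertices of valence one, so that every complementary component of the sphere system has nontrivial fundamental group coming from the corresponding vertex of $X$.

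Next I would check that the two maps are mutually inverse. In one direction this is essentially tautological: starting from a weighted sphere system $\sum_i a_i S_i$ not in $\BS_n^\infty$, the dual graph $X$ has an edge of length $a_i$ for each $S_i$, and applying the surgery construction to the collapse map $M_n\to X$ recovers the $S_i$ up to isotopy by the uniqueness part of the normal-form/minimality argument in Lemma \ref{lem:split-sphere} (an injective simplicial map of the dual tree of the recovered system into the Bass-Serre tree of $X$ that is onto by minimality forces the systems to coincide). In the other direction one uses that two marked graphs represent the same point of $\BO_n$ iff they are related by an isometry in the correct homotopy class, together with Laudenbach's theorem (recalled in Section 1) that homotopic essential spheres are isotopic; this pins down the sphere system, hence the point of $\BS_n\setminus\BS_n^\infty$, independently of the choices made.

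For $\Out(\BF_n)$-equivariance, observe that an element of $\Out(\BF_n)$ acts on $\BO_n$ by precomposing the marking, and acts on $\BS_n$ via the mapping class group as in \eqref{actionsphere}; the surgery construction is natural with respect to changing the marking by a homotopy equivalence realized by a diffeomorphism of $M_n$ (here one invokes again Laudenbach's identification of the kernels of \eqref{eq1} and \eqref{eq2}), so equivariance is immediate from naturality. Finally, for the homeomorphism claim one checks continuity of both maps on each (open) simplex and its faces: a point of a closed simplex of $\BS_n\setminus\BS_n^\infty$ varies continuously in $\BO_n$ because collapsing an edge of $X$ to length zero corresponds exactly to deleting a sphere from the system and passing to a face of the simplex, and conversely an $L$-Lipschitz comparison of nearby graphs in $\BO_n$ can be realized by a homotopy of the defining maps moving the sphere systems through normal form, so nearby points of $\BO_n$ have sphere systems in the same or an adjacent simplex with nearby weights.

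The main obstacle I expect is not any single clever idea but the bookkeeping needed to make the surgery-and-normal-form argument genuinely \emph{canonical}: one must show the resulting sphere system is well defined up to isotopy independently of the general-position perturbation and of which homotopy equivalence $M_n\to X$ realizing the marking was chosen, and that this independence is compatible with varying the point in $\BO_n$ continuously. All the needed tools — Dehn's lemma surgery, Hatcher's normal form \cite[Propositions 1.1 and 1.2]{Hatcher}, minimality of Bass-Serre trees, and Laudenbach's homotopy-implies-isotopy theorem — are already on the table, so the argument is a careful assembly rather than a new insight; accordingly, since all of this is carried out in detail in Hatcher's appendix, we content ourselves with this sketch and refer the reader to \cite[Appendix]{Hatcher}.
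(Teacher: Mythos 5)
The paper itself offers no proof of this proposition: it is stated as Hatcher's result and the reader is simply referred to \cite[Appendix]{Hatcher}. So your decision to sketch the construction (spheres dual to the edge midpoints of a marked graph, weights given by edge lengths, equivariance by naturality of the construction, continuity checked simplexwise via edge collapse) and then defer the details to Hatcher's appendix is consistent with what the paper does, and the outline is the standard route.

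One step of your sketch, however, is wrong as written. You assert that the image avoiding $\BS_n^\infty$ ``is precisely the requirement that $X$ have no vertices of valence one, so that every complementary component of the sphere system has nontrivial fundamental group coming from the corresponding vertex of $X$.'' By definition, $\BS_n^\infty$ consists of the weighted systems with at least one \emph{non-simply connected} complementary component, so lying outside $\BS_n^\infty$ means every complementary component has \emph{trivial} fundamental group. What guarantees this for the system dual to a point of $\BO_n$ is that the map $M_n\to X$ realizes the marking, i.e.\ that the dual graph carries all of $\pi_1(M_n)\simeq\BF_n$; each complementary piece is then a punctured $3$-sphere. The absence of valence-one vertices is a different condition: it corresponds to the essentiality of the spheres (a simply connected complementary piece with a single boundary sphere would be a ball, making that sphere inessential), and conversely the dual graph of an essential system with simply connected complements automatically has no valence-one vertices. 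As stated, your justification conflates these two conditions and gets the fundamental-group requirement backwards; with that corrected, the rest of the sketch is a fair summary of Hatcher's argument.
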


Besides introducing $CV_n$, in \cite{Culler-Vogtmann} Culler and Vogtmann define what is called the {\em spine} $K_n$ of $CV_n$. Considering $CV_n$ as a subset of $\BS_n$, the spine $K_n$ is the maximal full subcomplex of the first barycentric subdivision of $\BS_n$ which is contained in $CV_n$ and is disjoint from $\BS_n^\infty$. 

By construction, \eqref{actionsphere} induces an action $\Out(\BF_n) \actson K_n$ by simplicial automorphisms and hence a homomorphism 
\begin{equation}
\Out(\BF_n) \to \Aut(K_n).
\label{eq:bv}
\end{equation} 
The key ingredient in the proof of Theorem \ref{main} in the next section is the following result of Bridson and Vogtmann \cite{Bridson-Vogtmann},

\begin{sat}[Bridson-Vogtmann]
For $n\geq 3$, the homomorphism \eqref{eq:bv} is an isomorphism. 
\end{sat}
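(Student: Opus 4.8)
The plan is to prove that the homomorphism \eqref{eq:bv}, which I denote $\iota_n\colon\Out(\BF_n)\to\Aut(K_n)$, is injective and surjective. Throughout I would regard $K_n$ as the geometric realisation of the poset $P_n$ of equivalence classes of marked graphs with fundamental group $\BF_n$, all of whose vertices have valence at least $3$ and no edge of which separates, ordered by forest collapse; so a vertex of $K_n$ is such a marked graph, an edge of $K_n$ records a single collapse, and a simplex is a chain. Since $P_n$ is graded, with the rank of a graph equal to its number of edges minus $n$, the complex $K_n$ is pure of dimension $2n-3$: its rank-$0$ vertices are the marked roses and its rank-$(2n-3)$ vertices the trivalent marked graphs.

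Injectivity should be routine. If $\Phi\in\Ker\iota_n$, then $\Phi$ fixes every vertex of $K_n$, in particular every marked rose; since the $\Out(\BF_n)$-stabiliser of a marked rose is the finite group $\Isom(\text{rose})\cong(\BZ/2)^n\rtimes S_n$ of signed permutations of the associated basis, $\Phi$ must restrict to such a signed permutation with respect to \emph{every} free basis of $\BF_n$. A short computation with two or three suitably chosen bases then forces $\Phi=1$; this is the first point at which $n\ge 3$ is needed, since for $n=2$ the outer class of $x_i\mapsto x_i^{-1}$ lies in the kernel.

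For surjectivity I would first argue that any $\phi\in\Aut(K_n)$ preserves rank, hence is the realisation of an automorphism of the poset $P_n$. The tool is the link of a vertex $v$: in the order complex of $P_n$ this link is the join of the order complex of the collapse poset $(P_n)_{<v}$ --- a ``forest complex'' of dimension $\rank(v)-1$ --- with the order complex of the blow-up poset $(P_n)_{>v}$ --- an ``ideal-forest complex'' of dimension $2n-4-\rank(v)$. For $n\ge 3$ these two families of complexes have visibly different homotopy type, and since the join decomposition of a finite complex is essentially unique one can read off from $\operatorname{link}(v)$ which factor is which, and hence the rank of $v$. Thus $\phi$ carries marked roses to marked roses and preserves rank generally; being simplicial and rank-preserving it is order-preserving on every maximal chain, so it is the realisation of an automorphism of $P_n$, still written $\phi$. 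Since $\Out(\BF_n)$ acts transitively on marked roses with stabiliser $G:=\Isom(\text{rose})$, after composing $\phi$ with an element of $\Out(\BF_n)$ I may assume $\phi$ fixes the standard rose $R_0$, and then $\phi$ restricts to an automorphism of the link of $R_0$ --- the complex $\CB$ parametrising blow-ups of a wedge of $n$ labelled oriented circles --- fixing its cone point.

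The crux of the whole argument, and the step I expect to be the main obstacle, is the local rigidity statement $\Aut(\CB)=G$: every symmetry of the space of blow-ups of a single rose is induced by a signed permutation of the petals. This is precisely where $n\ge 3$ is indispensable --- for $n=2$ the spine $K_2$ is a tree, with far more symmetries than $\Out(\BF_2)$ --- and establishing it (showing that the incidence pattern of the blow-ups of a rose inside $K_n$ determines the basis up to the rose's own symmetries) is the technical heart. Granting it, composing $\phi$ with the element of $G\le\Out(\BF_n)$ realising $\phi|_{\CB}$ lets me assume $\phi$ acts as the identity on $\CB$, hence fixes every rank-$1$ vertex above $R_0$ and so every rose at Whitehead distance $1$ from $R_0$. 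Repeating this normalisation at each newly fixed rose, and using that the graph on the marked roses in which two roses are joined when they share a rank-$1$ upper bound is connected (a theorem of Whitehead), one propagates triviality outward from $R_0$ to all of $K_n$. Hence, after the initial correction, $\phi$ is the identity; so the original $\phi$ lies in the image of $\iota_n$, and surjectivity --- hence the theorem --- follows.
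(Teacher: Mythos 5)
This statement is not proved in the paper at all: it is imported verbatim from Bridson--Vogtmann's \emph{The symmetries of outer space} and used as a black box, so there is no internal proof to compare yours against. What you have written is a sketch of a proof of the cited theorem itself, and its broad shape does match the actual argument of Bridson--Vogtmann: recognise the roses combinatorially, show that the symmetries of the star of a rose are exactly the signed permutations of the petals, and propagate along the connected graph of roses related by Nielsen/Whitehead moves. But as a proof it has genuine gaps, and not only the one you flag.

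First, your rank-recognition step does not work as stated. Join decompositions of finite simplicial complexes are not unique in general, and even where a decomposition of $\operatorname{link}(v)$ into the descending and ascending order complexes exists, ``visibly different homotopy type'' is an assertion, not an argument: the downward factor is a forest-collapse complex and the upward factor an ideal-edge complex, and distinguishing them (and hence reading off $\rank(v)$) requires the careful counting and link analysis that occupies a substantial portion of the Bridson--Vogtmann paper. Second, the local rigidity $\Aut(\CB)\cong(\BZ/2)^n\rtimes S_n$, which you correctly identify as the heart of the matter, is precisely the main content of the theorem; asserting it is not proving it, and this is also where the failure for $n=2$ is actually located. Third, your propagation step is flawed as written: if at each newly reached rose $R_1$ you ``repeat the normalisation'' by composing with a further element of the stabiliser of $R_1$, you may destroy the identifications already achieved on $\operatorname{star}(R_0)$. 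What is needed, and what must be proved, is that an automorphism of $\operatorname{star}(R_1)$ which is the identity on $\operatorname{star}(R_0)\cap\operatorname{star}(R_1)$ is the identity on all of $\operatorname{star}(R_1)$ --- a rigidity statement about the overlap of adjacent rose stars, not a renormalisation. Until (a), (b) and (c) are supplied, the proposal is an outline of the known proof rather than a proof.
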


Observe that, for $n=2$, $\Aut(K_2)$ is uncountable.

\section{}
In this section we prove Theorem \ref{main}, whose statement we now recall:

\begin{named}{Theorem \ref{main}}
The natural map $\Out(\BF_n)\to\Aut(\CG_n)$ is an isomorphism for $n\ge 3$.
\end{named}
To begin with, we remind the reader that by Lemma \ref{lem:split-sphere} the free splitting graph $\CG_n$ is $\Out(\BF_n)$-equivariantly isomorphic to the 1-skeleton $\BS_n^{(1)}$ of the sphere complex $\BS_n$. By Lemma \ref{lem:flag}, $\BS_n$ is flag and hence the claim of Theorem \ref{main} will follow once we prove that the simplicial action $\Out(F_n) \actson \BS_n$ in \eqref{actionsphere} induces an isomorphism 
\begin{equation}
\Out(\BF_n) \to \Aut(\BS_n).
\label{manolo}
\end{equation}
We start by proving that $\BS_n^\infty$ is invariant under $\Aut(\BS_n)$.

\begin{lem}\label{preserve}
Every automorphism of $\BS_n$ preserves the subcomplex $\BS_n^\infty$.
\end{lem}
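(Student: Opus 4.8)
The plan is to characterize membership in $\BS_n^\infty$ purely in terms of the combinatorics of the simplex lattice of $\BS_n$, so that the characterization is manifestly preserved by any automorphism. The cleanest invariant to work with is the notion of a maximal simplex. Recall that a maximal system of spheres has exactly $3n-3$ components, so every maximal simplex of $\BS_n$ has dimension $3n-4$; conversely, by the facts collected in Section 1, a sphere system $S_1 \cup \dots \cup S_k$ is contained in a maximal system if and only if $k \le 3n-3$, and it is itself maximal exactly when $k = 3n-3$. Thus $\Aut(\BS_n)$ permutes the top-dimensional simplices of $\BS_n$. Now observe that a vertex $[S]$ lies in $\BS_n^\infty$ if and only if \emph{every} maximal system of spheres containing $S$ has a component separating off a piece that is not simply connected — equivalently, $S$ is a separating sphere such that one of the two sides, when cut along a maximal system, still retains a handle. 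I would instead use the following more robust formulation: a vertex $v$ of $\BS_n$ lies outside $\BS_n^\infty$ precisely when $v$ is the barycenter of no simplex meeting $\BS_n^\infty$, but since $\BS_n^\infty$ is defined on the nose as a subcomplex, what I really want is a description of its vertices and then of which simplices span it.

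Here is the concrete route. First, I claim that a vertex $[S] \in \BS_n^{(0)}$ lies in $\BS_n^\infty$ if and only if $S$ is \emph{non-separating}. Indeed, if $S$ is non-separating, then by the last bullet point of Section 1 there is a sphere system $\Sigma'$ containing $S$ with $M_n \setminus \Sigma'$ a $3$-sphere with $2n$ balls removed; but more directly, cutting $M_n$ along a single non-separating sphere yields a connected manifold with $\pi_1 \cong \BF_{n-1}$, which is not simply connected since $n \ge 3$, so $[S] \in \BS_n^\infty$. Conversely, if $S$ is separating, then $M_n \setminus S$ has two components whose fundamental groups are free of ranks summing to $n$; one checks that $[S] \in \BS_n^\infty$ iff one of these ranks is positive — i.e. iff $S$ does \emph{not} split off a single $\BS^1 \times \BS^2$-summand's worth, wait: actually a separating $S$ can still lie in $\BS_n^\infty$. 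So the non-separating characterization alone is insufficient; I must also detect separating spheres both of whose sides carry $\pi_1$. The fix: $\sum a_i S_i \in \BS_n^\infty$ iff some component of the complement is non-simply-connected, which happens iff the dual graph of $\bigcup S_i$ is not a tree, OR the dual graph is a tree but some vertex of it, viewed as a piece of $M_n$, is non-simply-connected. Combinatorially, the dual graph being a tree versus having a cycle, and the number of edges versus the rank, are both recoverable: a simplex $\sigma$ of dimension $k-1$ (i.e. $k$ spheres) with $M_n \setminus \bigcup S_i$ having $c$ components and first Betti number of the dual graph equal to $k - c + 1$; then $\sum_i (\text{rank of } \pi_1 \text{ of } i\text{-th piece}) = n - (k - c + 1)$, so $\sigma \not\subset \BS_n^\infty$ iff $k - c + 1 = n$, i.e. $c = k - n + 1$. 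The number $c$ of complementary components is itself determined by the lattice: $c - 1$ equals the maximal number of additional spheres one can add that pairwise cross but can be individually disjoined — more simply, $c$ is read off from how $\sigma$ sits inside maximal simplices (each component being a thrice-punctured sphere after completing to a maximal system contributes in a controlled way). So the plan is: show $c$ is a simplicial invariant of $\sigma$, show $k = \dim \sigma + 1$ trivially is, and conclude that $\sigma \subset \BS_n^\infty \iff c \ne k - n + 1$, a condition preserved by $\Aut(\BS_n)$. Hence $\BS_n^\infty$, being spanned by those vertices $v$ with: $v$ lies in some simplex $\sigma$ with $c(\sigma) \ne \dim\sigma + 1 - n + 1$ — no, one must be careful that $\BS_n^\infty$ is the subcomplex of \emph{those} $\sum a_i S_i$ with non-simply-connected complement, so a simplex is in $\BS_n^\infty$ iff its own complement is non-simply-connected, which is the condition $c(\sigma) \ne \dim\sigma + 2 - n$. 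This is purely combinatorial, so preserved.

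The main obstacle I anticipate is establishing that the number $c$ of connected components of $M_n \setminus \bigcup_i S_i$ — or equivalently the first Betti number of the dual graph — is genuinely recoverable from the abstract simplicial complex $\BS_n$, independently of the topology. I would handle this by an inductive/extension argument in the spirit of Hatcher's normal form: extend $\sigma$ to a maximal simplex $\tau$, where the complement is a disjoint union of $3n-3$... rather, $2n-2$ thrice-punctured spheres; the pattern of which faces of $\tau$ of dimension one less are themselves maximal (they aren't, since maximal simplices all have the same dimension), so instead use: the link of $\sigma$ in $\BS_n$ decomposes according to the complementary pieces — an edge can be added to $\sigma$ only inside a complementary component, and the link of $\sigma$ is the join of the links within each piece, each of which is the sphere complex of that smaller manifold. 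The number of join factors that are nonempty, together with a rank count accessible from the dimension of each factor's complex, then pins down both $c$ and the ranks, hence detects $\BS_n^\infty$. Spelling out this join decomposition of links and the associated dimension bookkeeping is the technical heart; everything else is formal.
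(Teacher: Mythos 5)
Your reduction of the topology to the identity $n=\sum_j r_j+(k-c+1)$ (where $k$ is the number of spheres, $c$ the number of complementary components, and $r_j$ the ranks of the pieces) is correct, and the resulting criterion ``$\sigma\subset\BS_n^\infty$ iff $c>k-n+1$'' is a valid topological characterization. The genuine gap is that the entire lemma now rests on the unproved assertion that $c$ is recoverable from the abstract simplicial structure of $\BS_n$, and the mechanism you sketch for this does not work as stated. The link of $\sigma$ is indeed a join of complexes associated to the complementary pieces, but a piece homeomorphic to a $3$-sphere with three balls removed contributes an \emph{empty} join factor: it contains no essential sphere that is not boundary-parallel. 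Since joining with the empty complex changes nothing, these pieces are invisible in the link, and they are the generic case (for a maximal system all $2n-2$ pieces are of this type and the link is empty). So ``number of nonempty join factors'' computes only the number of pieces admitting essential non-peripheral spheres, not $c$, and no dimension count rescues this: the codimension of $\sigma$ equals $\sum_j(3r_j+b_j-3)$, to which the three-holed-sphere pieces contribute $0$. You would need a genuinely new idea to detect $c$ combinatorially, and you have not supplied one. (A smaller issue: the intermediate claim that the complement is non-simply-connected iff the dual graph is not a tree or some vertex piece is non-simply-connected is false in its first disjunct -- the dual graph of a maximal system has first Betti number $n$ -- though your later rank computation supersedes it.)

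For comparison, the paper sidesteps the need to recognize $c$ for arbitrary simplices by a two-step reduction: first, every simplex of $\BS_n^\infty$ is shown to be a face of a \emph{codimension-one} simplex of $\BS_n^\infty$ (by an explicit construction inside a non-simply-connected complementary piece); second, a codimension-one simplex lies in $\BS_n^\infty$ if and only if it is a face of a \emph{unique} top-dimensional simplex, because its exceptional complementary piece is either a four-holed $3$-sphere (three completions) or $\BS^1\times\BS^2$ minus a ball (one completion). Counting maximal cofaces of a codimension-one simplex is manifestly a simplicial invariant, so no component count is ever needed. If you want to salvage your approach, you should either prove that $c$ is an invariant of $\Aut(\BS_n)$ by some other route, or restrict attention to codimension-one simplices as the paper does.
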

\begin{proof}
We first observe that every simplex in $\BS_n^\infty$ is contained in a codimension 1 simplex which is also contained in $\BS_n^\infty$. To see this, let $v_0,\dots,v_k$ be vertices of $\BS^n$ spanning a $k$-simplex in $\BS_n^\infty$. We represent these vertices by pairwise disjoint embedded spheres $S_0,\dots,S_k$. By the definition of $\BS_n^\infty$ there is a component $U$ of $M\setminus\bigcup_i S_i$ containing a sphere $S$ with $U\setminus S$ connected. Let $\alpha$ be an embedded curve in $U$ intersecting $S$ exactly once, $V$ a closed regular neighborhood of $S\cup\alpha$ and $S'$ the boundary of $V$; $S'$ is an essential embedded sphere. Cutting $V$ open along $S$ we obtain a 3-sphere with 3 balls removed. In particular, every embedded sphere in $V$ disjoint from $S$ is parallel to one of $S$ or $S'$. Let $\Sigma$ be a maximal sphere system containing $S_0,\dots,S_k,S,S'$. The simplex in $\BS_n$ determined by $\Sigma\setminus S$ has codimension 1 and is contained in $\BS_n^\infty$.

The upshot of this observation is that the claim of Lemma \ref{preserve} follows once we show that codimension 1 simplices contained in $\BS_n^\infty$ can be characterized in terms of simplicial data. Namely, we claim that a codimension 1 simplex is contained in $\BS_n^\infty$ if and only if it is contained in a unique top-dimensional simplex. Suppose that a system of spheres $\Sigma$ determines a codimension 1 simplex $[\Sigma]$ and consider $M\setminus\Sigma$. As mentioned above, all components of $M\setminus\Sigma$ but one are homeomorphic to a 3-sphere with 3 balls removed. The remaining component, call it $U$, is either a 3-sphere with 4 balls removed or $\BS^1\times\BS^2$ with a ball removed. In the first case, $[\Sigma]$ is contained in $\BS_n\setminus\BS_n^\infty$ and, since $U$ contains 3 different essential spheres, the simplex $[\Sigma]$ is a face of 3 distinct maximal simplices. On the other hand, if $U$ is homeomorphic to $\BS^1\times\BS^2$ with a ball removed, then $[\Sigma]\subset\BS_n^\infty$ and $U$ contains a unique embedded sphere which does not bound a ball and is not parallel to $\D U$. We deduce that $[\Sigma]$ is a face of a unique maximal simplex. 

This concludes the proof of Lemma \ref{preserve}.
\end{proof}

It follows from Lemma \ref{preserve} that $\Aut(\BS_n)$ preserves the hairy Culler-Vogtmann space $\BO_n=\BS_n\setminus\BS_n^\infty$. We now prove that $\Aut(\BS_n)$ preserves the spine $K_n$ of $CV_n\subset\BO_n$ as well.

\begin{lem}\label{preserve-spine}
Every simplicial automorphism of $\BO_n$ preserves the spine $K_n$ of $CV_n$.
\end{lem}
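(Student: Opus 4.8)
The plan is to characterize the spine $K_n$ intrinsically inside $\BO_n$ using simplicial data, just as $\BS_n^\infty$ was characterized inside $\BS_n$ in the proof of Lemma \ref{preserve}. Recall that $K_n$ sits inside the first barycentric subdivision $\BO_n'$ of $\BO_n$; a vertex of $\BO_n'$ is (the barycenter of) a simplex $\sigma$ of $\BO_n$, i.e. a weighted sphere system $\Sigma$ with $M\setminus\Sigma$ having all components simply connected. Such a vertex lies in $K_n$ precisely when the underlying graph $X$ of $\sigma$ — the dual graph to $\Sigma$ — has no separating edges, equivalently when no component of $\Sigma$ separates $M$. So the first step is to show that this last property is detectable from the combinatorics of $\BO_n$, and in fact from that of $\BS_n$ via Lemma \ref{preserve}: a simplicial automorphism $\Phi$ of $\BO_n = \BS_n \setminus \BS_n^\infty$ is, by Lemma \ref{lem:flag} and Lemma \ref{preserve}, the restriction of an automorphism of $\BS_n$ preserving $\BS_n^\infty$, and it then acts on the barycentric subdivision $\BO_n'$ in the obvious way. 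Hence it suffices to prove that a vertex of $\BO_n'$ lies in $K_n$ iff the corresponding simplex of $\BO_n$ satisfies a condition invariant under $\Aut(\BS_n)$.

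Next I would make that condition precise. A sphere system $\Sigma$ has a non-separating component iff, using the fifth bullet in Section 1, there is a sub-system $\Sigma' \subseteq \Sigma$ with $M \setminus \Sigma'$ connected — a $3$-sphere with $2n$ balls removed; dually this says the graph $X$ dual to $\Sigma$ contains a sub-graph (obtained by collapsing some edges) which is a rose, i.e. has first Betti number achieved with a single vertex. More usefully: the simplex $[\Sigma]$ of $\BS_n$ spanned by (a maximal enlargement of) $\Sigma$ being disjoint from $K_n$ is equivalent to $\Sigma$ having only separating components, and I would detect \emph{the presence of a separating component} $S$ of $\Sigma$ as follows. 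If $S\in\Sigma$ is separating, then cutting along $S$ disconnects $M$, and one can build a maximal system $\Sigma \subseteq \Sigma_{\max}$; the link data around the face corresponding to $\Sigma_{\max} \setminus S$ distinguishes separating from non-separating edges, because removing a separating edge from a maximal system leaves a component that is a $3$-sphere with $4$ balls removed which contains a \emph{separating-type} sphere, whereas removing a non-separating edge leaves a component that is $\BS^1\times\BS^2$ minus a ball (cf. the second bullet of Section 1). This last distinction — a codimension-$1$ face of a maximal simplex of $\BS_n$ being of "$\BS^1\times\BS^2$-type" versus "four-holed-sphere-type" — is exactly the one already isolated and shown to be simplicially detectable (via the number of top-dimensional simplices containing it) in the proof of Lemma \ref{preserve}. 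So I would phrase the criterion purely in terms of maximal simplices and their codimension-$1$ faces and thus conclude that $\Aut(\BS_n)$ — hence $\Aut(\BO_n)$ — permutes the set of maximal sphere systems with no separating component, and more generally preserves, among all sphere systems, those with at least one separating component.

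From there the conclusion is a short deduction. A vertex of $\BO_n'$, i.e. a simplex $\sigma$ of $\BO_n$ with all complementary components simply connected, lies in $K_n$ iff $\sigma$ has \emph{no} separating component, and this property is invariant under the $\Aut(\BS_n)$-action by the previous paragraph; moreover $K_n$ is a \emph{full} subcomplex of $\BO_n'$, so once the vertex set is preserved, so is all of $K_n$. (Here one uses that $\sigma \subseteq \sigma'$ with $\sigma'$ having no separating sphere forces $\sigma$ to have no separating sphere, so the "no separating component" vertices indeed span a full subcomplex, matching the definition of $K_n$ as the maximal full subcomplex of the barycentric subdivision lying in $CV_n$ and disjoint from $\BS_n^\infty$.) Therefore every simplicial automorphism of $\BO_n$ carries $K_n$ to itself, proving Lemma \ref{preserve-spine}.

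The main obstacle I anticipate is bookkeeping rather than conceptual: one must carefully match up "$\Sigma$ has a separating component" with a condition stated entirely in the language of simplices, maximal simplices and their faces in $\BS_n$, being careful that cutting along a separating sphere $S$ and then maximalizing does produce, upon deleting $S$, precisely a four-holed-sphere component (and not accidentally reconnecting things), and conversely that non-separating $S$ yields the $\BS^1\times\BS^2$-minus-a-ball component — i.e. applying the second and fifth bullets of Section 1 correctly. The other delicate point is checking fullness of $K_n$ inside the barycentric subdivision so that preserving vertices suffices; this is immediate from the quoted definition but should be stated.
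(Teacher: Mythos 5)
Your overall scaffolding (reduce to showing that the set of simplices of $\BO_n$ with no separating component is preserved, then use that $K_n$ is the full subcomplex of the barycentric subdivision spanned by the corresponding barycenters) is the same as the paper's, but the step where you actually detect separating spheres combinatorially is wrong, and it is the heart of the matter. You claim that for a component $S$ of a maximal system $\Sigma$, the special piece of $M\setminus(\Sigma\setminus S)$ is a $3$-sphere with $4$ balls removed when $S$ is separating and is $\BS^1\times\BS^2$ minus a ball when $S$ is non-separating. The dichotomy in the second bullet of Section 1 does not line up with separating versus non-separating: the special piece is $\BS^1\times\BS^2$ minus a ball exactly when the two sides of $S$ lie in the \emph{same} complementary piece of $\Sigma$, i.e.\ when $S$ is a loop edge of the dual graph, and it is a four-holed sphere exactly when $S$ is a non-loop edge. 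A separating sphere is always a non-loop edge, but so are most non-separating spheres (any edge lying on an embedded cycle of length at least two). Concretely, for $n=3$ take a maximal system whose dual graph is the complete graph on four vertices: it has no loops and no separating edges, every component is non-separating, and yet every codimension-one face lies in three maximal simplices. So the ``unique top-dimensional coface'' criterion you import from Lemma \ref{preserve} detects loops, not separating spheres (and, as stated, your test even depends on the choice of maximal extension of $\sigma$); your characterization of the vertices of $K_n$ therefore fails, e.g.\ it would misclassify top-dimensional simplices of $CV_n$ such as the one dual to the complete graph on four vertices.

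Detecting separating components genuinely requires a more global criterion, and this is what the paper's proof supplies: a top-dimensional simplex $\sigma$ of $\BO_n$ lies outside $CV_n$ if and only if it has a codimension-one face $\tau$ (drop a sphere $S$) such that for \emph{every} face $\eta\subset\sigma$ contained in $\BS_n\setminus\BS_n^\infty$ the face $\eta\cap\tau$ is still contained in $\BS_n\setminus\BS_n^\infty$. One direction is Seifert--van Kampen (removing a separating sphere from a system with simply connected complementary pieces keeps them simply connected); the other direction uses the fifth bullet of Section 1: a non-separating $S_1$ extends to $S_1,\dots,S_n\subset\Sigma$ with connected, simply connected complement, and deleting $S_1$ from that subsystem lands you in $\BS_n^\infty$, so (*) fails. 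If you replace your local link-counting test by this quantified condition (which only uses $\BS_n^\infty$, already known to be invariant by Lemma \ref{preserve}), the rest of your argument, including the fullness remark about $K_n$ in the barycentric subdivision, goes through. A minor additional point: you do not need, and the quoted lemmas do not give, that every automorphism of $\BO_n$ extends to $\BS_n$; the condition above is stated entirely in terms of simplices of $\BO_n$ and the subcomplex $\BS_n^\infty$, which is all that is required.
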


Abusing terminology, we will say from now on the a simplex $\sigma$ is contained in, for instance, $CV_n$ if the associated open simplex is.

\begin{proof}
Since the spine of $CV_n$ is defined simplicially, it suffices to show that every automorphism of $\BO_n$ preserves $CV_n$ itself. As in the proof of Lemma \ref{preserve} it suffices to characterize combinatorially the maximal simplices (whose interior is) contained in $CV_n$; equivalently, we characterize those in $\BO_n\setminus CV_n$.

We claim that a top dimensional simplex $\sigma$ is contained in $\BO_n\setminus CV_n$ if and only if the following is satisfied:
\begin{itemize}
\item[(*)] $\sigma$ has a codimension one face $\tau$ such that whenever $\eta\subset\sigma$ is a face contained in $\BS_n\setminus\BS_n^\infty$ then $\eta\cap\tau\subset\BS_n\setminus\BS_n^\infty$.
\end{itemize}
We first prove that every top-dimensional simplex $\sigma\subset\BO_n\setminus CV_n$ satisfies (*). Let $\sigma$ be represented by a maximal sphere system $\Sigma$. The assumption that $\sigma\subset\BO_n\setminus CV_n$ implies that $\Sigma$ has a component $S$ which separates $M$. Let $\tau$ be the codimension 1 face of $\sigma$ determined by $\Sigma\setminus S$ and suppose that $\eta\subset\sigma$ is a face contained in $\BS_n\setminus\BS_n^\infty$. Denote by $\Sigma'$ the subsystem of $\Sigma$ corresponding to $\eta$. Since $S$ is separating and all components of $M\setminus\Sigma'$ are simply connected, it follows from the Seifert-van Kampen theorem that every component of $M\setminus(\Sigma'\setminus S)$ is simply connected as well. In other words, the simplex $\eta\cap\tau$ is contained in $\BS_n\setminus\BS_n^\infty$ as claimed.

Suppose now that the top dimensional simplex $\sigma$ is contained in $CV_n$; we are going to prove that (*) is not satisfied for any codimension 1 face $\tau\subset\sigma$. Continuing with the same notation, let $S_1\subset\Sigma$ be the sphere such that $\Sigma\setminus S_1$ represents $\tau$. Since $S_1$ is by assumption non-separating we can find other $n-1$ components $S_2,\dots,S_n$ of $\Sigma$ such that $M\setminus\bigcup_i S_i$ is homeomorphic to a 3-sphere with $2n$ balls removed. The simplex $\eta$ associated to the system $S_1\cup\dots\cup S_n$ is contained in $\BS_n\setminus\BS_n^\infty$. On the other hand, the complement of $S_2\cup\dots\cup S_n$ is not simply connected. Hence, the simplex $\eta\cap\tau$ is not contained in $\BS_n\setminus\BS_n^\infty$. This proves that (*) is not satisfied for the face $\tau$. As $\tau$ is arbitrary, this concludes the proof of Lemma \ref{preserve-spine}.
\end{proof}

It follows from Lemma \ref{preserve-spine} and the Bridson-Vogtmann theorem that there is a homomorphism
\begin{equation}\label{eq:bla}
\Aut(\BS_n)\to\Aut(K_n)\simeq\Out(\BF_n)
\end{equation}
As mentioned above, by Lemma \ref{lem:split-sphere} and Lemma \ref{lem:flag}, we have identified $\Aut(\CG_n)$ with $\Aut(\BS_n)$. In particular, the proof of Theorem \ref{main} boils down to showing that the homomorphism \eqref{eq:bla} is injective. This is the content of the following lemma:

\begin{lem}\label{lem:final}
The identity is the only automorphism of $\BS_n$ acting trivially on the spine $K_n$.
\end{lem}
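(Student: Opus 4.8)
Let me think about what we need. We have an automorphism $\phi \in \Aut(\BS_n)$ that acts trivially on the spine $K_n$. We want to show $\phi = \Id$ on all of $\BS_n$. The spine $K_n$ is a subcomplex of the first barycentric subdivision, so "acting trivially on $K_n$" means $\phi$ fixes every vertex of $K_n$, i.e., every barycenter of a simplex $\sigma$ of $\BS_n$ that is contained in $CV_n$ and disjoint from $\BS_n^\infty$. Since $\phi$ is simplicial on $\BS_n$, fixing the barycenter of a simplex $\sigma$ forces $\phi(\sigma) = \sigma$ as a set. So the first observation is: $\phi$ maps every maximal sphere system $\Sigma$ whose complementary components are all simply connected (a "rose" at the top dimension, i.e., $M \setminus \Sigma$ is a 3-sphere with $2n$ balls removed) to itself as a simplex.

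The plan is to show that $\phi$ fixes every vertex of $\BS_n$, i.e., every isotopy class of essential embedded 2-sphere $[S]$. The strategy: given a vertex $v = [S]$, find enough maximal simplices $\sigma$ of $K_n$-type (i.e., in $CV_n \setminus \BS_n^\infty$) that contain $v$ so that $v$ is the unique common vertex, or more robustly, so that $\phi$ must fix $v$. Concretely, I would argue as follows. First, every vertex $[S]$ with $S$ non-separating extends to a maximal sphere system $\Sigma$ with all complementary pieces simply connected (take a maximal system containing $S$; by the last bullet in Section 1 we can complete $S$ to a subsystem whose complement is a sphere with $2n$ balls removed, then any maximal refinement of that still has simply-connected pieces). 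Such a simplex $\sigma$ is a maximal simplex of $K_n$, hence $\phi(\sigma) = \sigma$. Thus $\phi$ permutes the $3n-3$ vertices of $\sigma$. Now I want to pin down which permutation: use that $\phi$ also fixes every codimension-one face $\tau \subset \sigma$ that still lies in $CV_n \setminus \BS_n^\infty$ (equivalently, $\tau$ is obtained by deleting a non-separating component of $\Sigma$ whose deletion keeps all pieces simply connected — but wait, deleting any component of a $2n$-punctured-sphere's maximal refinement may create a non-simply-connected piece). Here I need to be careful: the codimension-one faces of $\sigma$ that lie in $K_n$ are exactly those $\Sigma \setminus S_i$ for which the ambient component of $M \setminus (\Sigma \setminus S_i)$ is a 3-sphere with 4 balls removed (not $\BS^1 \times \BS^2$ minus a ball), by the structure theory recalled in Section 1 and the characterization in Lemma \ref{preserve}. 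And among the three essential spheres in such a 4-holed-sphere piece $U$, only the ones giving a *non-separating* sphere keep us in $CV_n$. So $\phi$ fixes each such face, and within a 4-holed sphere $U$ the three candidate spheres are distinguished by which partition of $\partial U$ into two pairs they induce; since $\phi$ fixes the faces $\Sigma \setminus S_i$ and hence fixes $\partial U$ componentwise once enough of $\Sigma$ is fixed, an inductive/bootstrapping argument forces $\phi$ to fix each $S_i$.

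So the real engine is a bootstrapping argument: I would set up an induction on the number of vertices of a maximal $K_n$-simplex $\sigma$ already known to be fixed, and use the "3 spheres in a 4-holed sphere are determined by their boundary partition" fact (third bullet of Section 1) together with "$\phi$ fixes the relevant codimension-one faces" to propagate fixedness from the faces to the missing vertex. Concretely: pick $\sigma$ corresponding to $\Sigma$ with $M\setminus\Sigma$ a $2n$-holed sphere; $\phi(\sigma)=\sigma$ permutes its vertices. For a component $S_i$ of $\Sigma$ such that $M \setminus (\Sigma \setminus S_i)$ has its exceptional component $U$ a 4-holed sphere, the face $\tau_i = \Sigma \setminus S_i$ lies in $\BS_n \setminus \BS_n^\infty$; if in addition the partition of $\partial U$ determined by $S_i$ has a representative that is non-separating in $M$ — which happens precisely when... — then $\tau_i \cup \{[S_i]\}$ can be realized inside $K_n$ by choosing an appropriate maximal refinement, OR more simply: $\phi$ fixes every vertex of $\tau_i$ by a suitable inductive hypothesis, so $\phi$ fixes $\partial U$ componentwise, and since $[S_i]$ is the unique vertex of $\sigma$ not in $\tau_i$ and $\phi$ permutes the three spheres of $U$ preserving $\partial U$-labels, $\phi([S_i]) = [S_i]$. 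Then I would argue that finitely many such $\sigma$ cover all non-separating vertices, and finally handle separating spheres: a separating sphere $[S]$ appears as a vertex of some maximal sphere system $\Sigma'$; although such $\Sigma'$ has a non-simply-connected piece so $[\Sigma']\notin K_n$, one can still determine $[S]$ from the non-separating vertices surrounding it (e.g. $S$ is determined by the bipartition of $\Sigma' \setminus S$ it induces, or: $[S]$ is the barycenter-direction not visible in $K_n$ but adjacent in $\BS_n$ to a $\phi$-fixed configuration that pins it down), using that $\phi$ is already known to be the identity on all non-separating vertices and on all of $K_n$.

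\textbf{Main obstacle.} The hard part is the bootstrapping: making rigorous that "$\phi$ fixes all the codimension-one $K_n$-faces of a top-dimensional $K_n$-simplex $\sigma$" actually forces $\phi$ to fix each vertex of $\sigma$, since a priori $\phi$ could permute the three spheres inside a 4-holed-sphere piece in a way compatible with a nontrivial permutation of $\partial U$, and $\partial U$'s components are themselves vertices we must have already pinned down. So one must organize the induction globally — over a connected "graph of maximal $K_n$-simplices sharing codimension-one $K_n$-faces" — rather than simplex by simplex, and check that this graph is connected and rich enough to rigidify everything, and then separately argue that separating vertices are determined by the (now fixed) non-separating ones. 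I expect setting up this global connectivity/rigidity argument, and the endgame for separating spheres, to be where all the actual work lies; the individual topological facts are all quoted from Section 1 or from Hatcher.
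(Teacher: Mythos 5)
Your high-level outline matches the paper's (fix every simplex of $CV_n$ setwise, upgrade to pointwise, then treat separating spheres using the already-fixed non-separating ones), but the two steps you yourself identify as "where all the actual work lies" are exactly the steps you leave unresolved, and the first of them has a one-line solution that makes your proposed global bootstrapping machinery unnecessary. The observation you are missing is this: if $\sigma\subset CV_n$ is a top-dimensional simplex determined by a maximal system $\Sigma$, then \emph{every} codimension-one face $\Sigma\setminus S$ is again contained in $CV_n$. Indeed, if the exceptional component $U$ of $M\setminus(\Sigma\setminus S)$ were $\BS^1\times\BS^2$ minus a ball rather than a $4$-holed sphere, then $\D U$ would be a component of $\Sigma$ separating $M$, contradicting $\sigma\subset CV_n$ (which forces $\Sigma$ to have no separating components). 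Hence each face $\Sigma\setminus S$ is fixed setwise by $\alpha$, and therefore the opposite vertex $[S]$ is fixed --- for every $S$. There is no permutation ambiguity, no need to track how the three spheres in a $4$-holed piece or the components of $\D U$ are matched up, and no connectivity argument over a graph of maximal simplices. (Note also that your description of a maximal $K_n$-simplex is off: a maximal sphere system always cuts $M$ into simply connected $3$-holed spheres, so every top-dimensional simplex of $\BS_n$ lies in $\BS_n\setminus\BS_n^\infty$; the condition singling out $CV_n$ is the absence of separating components, not simple connectivity of the complement.)

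The separating-sphere endgame is also left as a gesture ("a $\phi$-fixed configuration that pins it down"), and the candidate mechanisms you float are not argued. The paper's construction is concrete: given separating $S$ with $M\setminus S=U\cup V$, one builds a system $\Sigma=S\cup\Sigma_U'\cup\Sigma_V'$ so that $\Sigma\setminus S$ spans a simplex in $CV_n$ (hence is fixed pointwise by the first part) and so that $S$ is the \emph{unique} separating essential sphere disjoint from $\Sigma\setminus S$. Since $\alpha$ fixes $\Sigma\setminus S$ pointwise it preserves the set of vertices adjacent to all of its vertices, and every such vertex other than $[S]$ is non-separating and hence already fixed; therefore $[S]$ is fixed. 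Without this (or some equally explicit) construction, your argument for separating spheres does not close. As written, the proposal is a plausible strategy with the two load-bearing steps missing.
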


\begin{proof}
Recall that $K_n$ is the maximal full subcomplex of the first barycentric subdivision of $\BS_n$ which is contained in  $CV_n$ and is disjoint from $\BS_n^\infty$. The interior of every simplex contained in $CV_n$ intersects $K_n$. In particular, an automorphism $\alpha$ of $\BS_n$ which acts trivially on the spine $K_n$ maps every simplex in $CV_n$ to itself. We claim that the restriction of $\alpha$ to $CV_n$ is actually the identity. 

Let $\Sigma$ be a sphere system in $M$ determining a top dimensional simplex $\sigma$ in $CV_n$ and let $S$ be a component of $\Sigma$. We claim that the codimension 1 face given by $\Sigma\setminus S$ is also contained in $CV_n$; in order to see this it suffices to prove that it is contained in $\BS_n\setminus\BS_n^\infty$. If this were not the case, then the unique component $U$ of $M\setminus(\Sigma\setminus S)$ distinct from a 3-sphere with 3 balls removed is homeomorphic to $\BS^1\times\BS^2$ with a ball removed. The boundary of $U$ is a connected component of $\Sigma$ which separates $M$; a contradiction to the assumption that $\sigma\subset CV_n$. It follows that the automorphism $\alpha$ maps the codimension 1 face of $\sigma$ determined by $\Sigma\setminus S$ to itself. In particular, $\alpha$ has to fix the opposite vertex $[S]$ of $\sigma$; $[S]$ being arbitrary, we have proved that $\alpha$ is the identity on $\sigma$. Hence, $\alpha$ is the identity on $CV_n$.

We are now ready to prove that $\alpha$ fixes every vertex $[S]$ in $\BS_n$; once we have done this, the claim of the lemma will follow. Given a vertex $[S]$ there are two possibilities. If the sphere $S$ is non-separating we can extend $S$ to a maximal sphere system $\Sigma$ with no separating components. The simplex determined by $\Sigma$ is contained in $CV_n$ and is hence fixed by $\alpha$. If $S$ is separating let $U$ and $V$ be the two components of $M\setminus S$. For a suitable choice of $r$, we identify $U$ with the complement of a ball $B$ in the connected sum $\#^r\BS^1\times\BS^2$ of $r$ copies of $\BS^1\times\BS^2$. Similarly, $V$ is the complement of a ball in the connected sum of $s$ copies of $\BS^1\times\BS^2$. The cases $r=1$ and $s=1$ are minimally special; they are left to the reader. 

We choose a maximal sphere system $\Sigma_U$ in $\#^r\BS^1\times\BS^2$ whose dual graph has no separating edges. Choosing some sphere $S'$ in $\Sigma_U$ we take a small regular neighborhood $\CN(S')$ of $S'$ in $\#^r\BS^1\times\BS^2$. Up to isotopy we may assume that the ball $B$ is contained in $\CN(S')$. The collection of spheres $\D\CN(S')\cup\Sigma_U\setminus S'$ is contained in $U=\#^r\BS^1\times\BS^2\setminus B$ and hence determines a sphere system $\Sigma_U'$ in $M$. We do a similar construction for $V$ obtaining a system $\Sigma_V'$ and set
$$\Sigma=S\cup\Sigma_U'\cup\Sigma_V'$$
The simplex determined by $\Sigma\setminus S$ is contained in $CV_n$ and is thus fixed by the automorphism $\alpha$. Also, it follows from the construction that $S$ is the unique separating sphere contained in $M\setminus(\Sigma\setminus S)$. By the above, all the vertices determined by any other sphere disjoint from $\Sigma\setminus S$ are fixed by $\alpha$. It follows that $\alpha$ fixes the vertex $[S]$ as claimed.
\end{proof}

As mentioned above, the proof of Lemma \ref{lem:final} concludes the proof of Theorem \ref{main}.\qed

\bigskip

\noindent Department of Mathematics, National University of Ireland, Galway. \newline \noindent
\texttt{javier.aramayona@nuigalway.ie}

\bigskip

\noindent Department of Mathematics, University of Michigan, Ann Arbor. \newline \noindent
\texttt{jsouto@umich.edu}

\end{document}